\newcommand{\bfn}{{\bf n}}
  \newenvironment{proofof}[1]{%
  \noindent {\bf Proof of #1}}%
  {\hspace*{\fill}$\Box$}
  \newtheorem{theorem}{Theorem}[section]
  \newtheorem{lemma} [theorem] {Lemma}%[section]
  \newtheorem{conjecture} [theorem] {Conjecture}%[section]
\def\enddiscard{}
\long\def\discard#1\enddiscard{}
  \newcommand{\Fn}{F^{\bf n}}
  \newcommand{\KF}{\mass(\cF_t)}
  \newcommand{\KT}{\mass(\cT_t)}
  \newcommand{\pr}{\mathbb P}
  \newcommand{\mass}{\mbox{mass}\,}
  \newcommand{\m}[1]{\marginpar{\tiny{#1}}}
\newcommand{\cU}{{\mathcal U}}
  \newcommand{\cA}{{\mathcal A}}
  \newcommand{\cB}{{\mathcal B}}
  \newcommand{\cF}{{\mathcal F}}
  \newcommand{\cT}{{\mathcal T}}
  \newcommand{\frag}{{\mbox{\rm frag}}}
\begin{document}
\title{Connectivity for bridge-alterable graph classes}

\author{Colin McDiarmid \\ Department of Statistics, Oxford University\\ 1 South Parks Road, Oxford OX1 3TG, UK\\
cmcd@stats.ox.ac.uk}
\date{26 February 2016}
\maketitle

\begin{abstract}
  A collection $\cA$ of graphs is called {bridge-alterable} if, for each graph $G$ with a bridge $e$, $G$ is in $\cA$ if and only if $G\!-\!e$ is.  For example the class $\cF$ of forests is bridge-alterable.  For a random forest $F_n$ sampled uniformly from the set $\cF_n$ of forests on vertex set $\{1,\ldots,n\}$, a classical result of R\'enyi (1959) shows that the probability that $F_n$ is connected is $e^{-\frac12 +o(1)}$.
  
Recently Addario-Berry, McDiarmid and Reed (2012) and Kang and Panagiotou (2013) independently proved that, given a bridge-alterable class $\cA$, for a random graph $R_n$ sampled uniformly from the graphs in $\cA$ on $\{1,\ldots,n\}$, 
the probability that $R_n$ is connected is at least $e^{-\frac12 +o(1)}$.
Here we give a more straightforward proof, and obtain a stronger non-asymptotic form of this result, which compares the probability to that for a random forest.
We see that the probability that $R_n$ is connected is at least the minimum over $\frac25 n < t \leq n$ of the probability that $F_t$ is connected. 
%Further, for $n$ sufficiently large, the probability that $R_n$ is connected is at least the probability that $F_{\lceil n/2 \rceil}$ is connected.
\bigskip

\noindent \textbf{Keywords:}
  random graph, connectivity, bridge-addable, bridge-alterable
\end{abstract}

%%%%%%%%%%%%%%%%%%%%%%%%%%%%%%%%%%%%%%%%%%%%%%%%%%%%%%%%%%%%%%%%%%%%%

\section{Introduction} \label{sec.intro}

A collection $\cA$ of graphs is {\em bridge-addable} if for each graph $G$ in $\cA$ and pair of vertices $u$ and $v$ in different components, the graph $G+uv$ obtained by adding the edge (bridge) $uv$ is also in $\cA$; that is, if $\cA$ is closed under adding bridges. This property was introduced in~\cite{msw05} (under the name `weakly addable'). If also $\cA$ is closed under deleting bridges we call $\cA$ {\em bridge-alterable}.  Thus
$\cA$ is bridge-alterable exactly when, for each graph $G$ with a bridge $e$, $G$ is in $\cA$ if and only if $G\!-\!e$ is in $\cA$.  The class $\cF$ of forests is bridge-alterable, as for example is the class of series-parallel graphs, the class of planar graphs, and indeed the class of graphs embeddable in any given surface.  All natural examples of bridge-addable classes seem to satisfy the stronger condition of being bridge-alterable.

Given a class $\cA$ of graphs we let $\cA_n$ denote the set of graphs in $\cA$ on vertex set $[n]:=\{1,\ldots,n\}$.
Also, we use the notation $R_n \in_u \cA$ to mean that $R_n$ is a random graph sampled uniformly from $\cA_n$
(where we assume implicitly that $\cA_n$ is non-empty). 

For a random forest $F_n \in_u \cF$, a classical result of R\'enyi~\cite{renyi59} from 1959 shows that, as $n \to \infty$
\begin{equation} \label{eqn.forest-conn}
  \pr( F_n \mbox{ is connected}) = e^{-\frac12 +o(1)}.
\end{equation}
In their investigations on random planar graphs, McDiarmid, Steger and~Welsh~\cite{msw05} showed that, when $\cA$ is bridge-addable, for $R_n \in_u \cA$
\begin{equation} \label{eqn.conj-badd}
  \pr( R_n \mbox{ is connected}) \geq e^{-1}.
\end{equation}
It was observed by the same authors~\cite{msw06} in 2006 that the class of forests seems to be the `least connected' bridge-addable class of graphs, and they made the following conjecture.
\begin{conjecture} \label{conj1}
  When $\cA$ is bridge-addable, for $R_n \in_u \cA$
  \[  \pr( R_n \mbox{ is connected}) \geq e^{-\frac12 +o(1)}. \]
\end{conjecture}
\noindent
This conjecture was then strengthened (see Conjecture 1.2 of~\cite{bbg2010}, Conjecture 5.1 of~\cite{amr2012}, or Conjecture 6.2 of~\cite{cmcd2013}) to the following non-asymptotic form.
\begin{conjecture} \label{conj2}
  When $\cA$ is bridge-addable, for $R_n \in_u \cA$
 \[   \pr( R_n \mbox{ is connected}) \geq \pr( F_n \mbox{ is connected}). \]
\end{conjecture}

Early progress was made on Conjecture~\ref{conj1} by Balister, Bollob{\'a}s and Gerke~\cite{bbg2008,bbg2010}; and recently Norin~\cite{norin2013} made further progress, showing that $\pr(R_n \mbox{ is connected}) \geq e^{-\frac23 +o(1)}$. %but the full conjecture remains open.
Addario-Berry, McDiarmid and Reed (2012) and Kang and Panagiotou (2013) independently proved the following theorem, which establishes the special case of Conjecture~\ref{conj1} when $\cA$ is bridge-alterable.
\begin{theorem} \cite{amr2012,kp2013} \label{thm.old}
Let $\cA$ be a bridge-alterable class of graphs, and let $R_n \in_u \cA$.  Then
\[ \pr(R_n \mbox{ is connected}) \geq e^{-\frac12 +o(1)}.\]
\end{theorem}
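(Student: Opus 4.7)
The plan is to partition $\cA_n$ into equivalence classes under bridge-alteration and bound the conditional probability of connectivity within each.

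\textbf{Equivalence classes.} For $G,G'\in\cA_n$ write $G\sim G'$ iff they have the same \emph{bridgeless core}, i.e.\ the same graph after deleting all bridges. Since $\cA$ is bridge-alterable, a graph lies in $\cA$ iff its bridgeless core does, so each equivalence class $[G_0]$ is either entirely contained in $\cA_n$ or disjoint from it. It therefore suffices to prove that for every bridgeless $G_0\in\cA_n$,
\[
\pr(R_n \text{ is connected} \mid R_n\in[G_0]) \;\geq\; \pr(F_t \text{ is connected})
\]
for some integer $t=t(G_0)$ with $n/3<t\leq n$; averaging over classes will then give the result.

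\textbf{Structure of a class.} Let $G_0$ have components (``blocks'') $B_1,\dots,B_k$ of sizes $n_1,\dots,n_k$. Graphs in $[G_0]$ correspond bijectively to edge-sets $F\subseteq\binom{[n]}{2}$ whose edges each join two distinct blocks and whose quotient $\widetilde F$ on $\{B_1,\dots,B_k\}$ is a simple forest; and $G_0+F$ is connected iff $\widetilde F$ is a spanning tree of $\{B_1,\dots,B_k\}$. Since $R_n$ is uniform on $\cA_n$, it is uniform on $[G_0]$, so
\[
\pr(R_n \text{ is connected} \mid [G_0]) \;=\; \frac{\sum_{T} \prod_{\{i,j\}\in T} n_i n_j}{\sum_{\Phi} \prod_{\{i,j\}\in \Phi} n_i n_j},
\]
where $T$ and $\Phi$ range over spanning trees and spanning forests of $K_k$. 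By the weighted version of Cayley's formula, the numerator equals $n_1\cdots n_k\cdot n^{k-2}$ (interpreted as $1$ when $k=1$).

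\textbf{Comparison with a uniform forest.} The combinatorial heart of the proof is to show this ratio is at least $\pr(F_t\text{ is connected})=t^{t-2}/|\cF_t|$ for some admissible $t$. When $k=n$ (all blocks singletons) the ratio equals $\pr(F_n\text{ is connected})$ exactly. For general block sizes I would split the denominator by the component structure of $\Phi$, apply the weighted Cayley formula to each component of $\Phi$, and then compare the resulting sum term-by-term with the partition-expansion of $|\cF_t|$. Intuitively: when $k>n/3$ the natural choice is $t=k$, while when $k\leq n/3$ the core must have a few large blocks, the ratio is then close to $1$, and a larger $t$ in $(n/3,n]$ still yields a valid bound.

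\textbf{Main obstacle.} The hard step is this comparison inequality and the associated choice of $t=t(G_0)$; once it is established, averaging over bridgeless cores yields $\pr(R_n\text{ is connected})\geq\min_{n/3<t\leq n}\pr(F_t\text{ is connected})=e^{-1/2+o(1)}$ by R\'enyi's estimate~\eqref{eqn.forest-conn}. The product-form of the numerator and the multiplicative decomposition of the denominator across components of $\Phi$ both suggest that the comparison should admit a clean proof, either by a weight-preserving injection between suitable sets of forests or by a generating-function identity. Pinpointing exactly why $n/3$ is the right threshold (as opposed to some other constant) looks like the most delicate part of the argument.
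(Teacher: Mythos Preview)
Your framework---partitioning $\cA_n$ by bridgeless core, identifying each class with weighted forests on $k$ ``blocks'', and computing the numerator via the weighted Cayley formula---is exactly the setup the paper uses. But the paper's argument diverges from yours at the comparison step, and the differences explain both how the hard inequality is proved and where $n/3$ comes from.

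First, the paper never lets $t$ depend on $G_0$ in the way you suggest: it always compares against $t=k$, the number of blocks. What it actually proves for each class is the two-sided bound
\[
\pr(R_n \text{ connected}\mid [G_0]) \;\geq\; \max\bigl\{\, e^{-k/n},\ \pr(F_k \text{ connected})\,\bigr\}.
\]
The first term comes from the standard bridge-addable inequality $\pr(\kappa=i+1)\le \tfrac{1}{i}\tfrac{k}{n}\pr(\kappa=i)$ summed over $i$; this is the ``ratio close to $1$'' phenomenon you allude to for small $k$, made precise. The second term is the comparison you want, and it is proved not by an injection or a generating-function identity but by splitting the weighted forest sum over partitions $\mathbf U$ of $[k]$, applying weighted Cayley on each part, and then bounding $\prod_i (s(U_i)/|U_i|)^{|U_i|-2}$ by $(n/k)^{k-2}$ via the concavity of $\log$ (Jensen). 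That single convexity step is the whole ``combinatorial heart''.

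Second, the threshold $n/3$ is not intrinsic to the class comparison at all; it comes from a separate numerical lemma: $\pr(F_m \text{ connected})<e^{-1/3}$ for every $m\ge 2$. Combined with the $e^{-k/n}$ bound, this means that for $k\le n/3$ the conditional probability already exceeds $e^{-1/3}>\pr(F_n \text{ connected})$, so those classes never drag the minimum below $\min_{n/3<t\le n}\pr(F_t \text{ connected})$. Your plan to handle small $k$ by choosing some other $t\in(n/3,n]$ directly is therefore unnecessary, and as stated (``a few large blocks, ratio close to $1$'') it is not an argument.

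In short: keep your decomposition, but replace the search over $t$ by the pair of bounds $e^{-k/n}$ and $\pr(F_k\text{ connected})$, prove the latter via Jensen on the partition expansion, and derive $n/3$ from the elementary estimate $\pr(F_m\text{ connected})<e^{-1/3}$.
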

\noindent
Here we give a reasonably short and straightforward proof of the following non-asymptotic form of this result,
which together with~(\ref{eqn.forest-conn}) gives Theorem~\ref{thm.old}.  This is a first step towards Conjecture~\ref{conj2}, at least for a bridge-alterable class.

\begin{theorem} \label{thm.newa}
Let $\cA$ be a bridge-alterable class of graphs, let $n$ be a positive integer, %and assume that $\cA_n \neq \emptyset$.  L
let $R_n \in_u \cA$, and let $F_t \in_u \cF$ for $t=1,2,\ldots$.  Let $\alpha= 0.4$.  Then
\begin{equation} \label{eqn.thmnewa}
 \pr(R_n \mbox{ is connected}) \geq \min_{\alpha n \leq t \leq n} \pr(F_{t} \mbox{ is connected}).
\end{equation}
\end{theorem}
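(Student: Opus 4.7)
My plan is to decompose each $G \in \cA$ into its bridgeless \emph{core} $G^\circ$ (obtained by deleting every bridge) together with a ``bridge-forest'', and analyse the conditional distribution of the bridge-forest given the core. By bridge-alterability, $G^\circ \in \cA$; conversely, given any bridgeless $G^\circ \in \cA$ with components of sizes $n_1, \ldots, n_k$ (where $\sum_i n_i = n$), adding any set $B$ of inter-component edges whose contraction to the $k$-element component-set is a forest (no multi-edge, no cycle) again lies in $\cA$. This yields a bijection between $\cA_n$ and such pairs $(G^\circ, B)$, so conditional on $R_n^\circ$ the bridge-forest is uniform over valid structures, and $R_n$ is connected iff $B$ contracts to a spanning tree on $[k]$. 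By weighted Cayley the number of such spanning contractions is $T(\vec n) = n^{k-2}\prod_i n_i$, and writing each forest on $[k]$ as a set partition with a tree on each block yields $F(\vec n) = \prod_i n_i \cdot \sum_{\pi \vdash [k]} \prod_{B\in\pi} m_B^{|B|-2}$, where $m_B = \sum_{i\in B} n_i$. The conditional connectivity probability is $g(\vec n) := T(\vec n)/F(\vec n)$.

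The heart of the argument is a per-partition inequality: for every set partition $\pi$ of $[k]$,
\[ n^{k-2} \prod_{B\in\pi} |B|^{|B|-2} \;\geq\; k^{k-2} \prod_{B\in\pi} m_B^{|B|-2}. \]
Writing $r_B := m_B/|B| \geq 1$, a short rearrangement together with Jensen applied to $\log$ reduces this to $\prod_B r_B \geq n/k$. The $|B|$-weighted mean of the $r_B$ equals $n/k$, so some $r_B \geq n/k$; since each $r_B \geq 1$, the product exceeds the maximum, and so is $\geq n/k$. Summing the per-partition inequality over all $\pi$ and using the identity $|\cF_k| = \sum_\pi \prod_B |B|^{|B|-2}$ then gives
\[ g(\vec n) \;\geq\; k^{k-2}/|\cF_k| \;=\; \pr(F_k \text{ connected}). \]

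Putting things together, $\pr(R_n \text{ connected}) = \Exp g(\vec n(R_n^\circ)) \geq \Exp \pr(F_K \text{ connected})$, with $K$ the number of components of the core. Whenever $K > n/3$ this already yields at least $\min_{n/3 < t \leq n} \pr(F_t \text{ connected})$, as required. The hard case is $K \leq n/3$, where $\pr(F_K \text{ connected})$ can fall below the target minimum (for instance $\pr(F_2 \text{ connected}) = \tfrac12$ while the minimum over larger $t$ sits near $e^{-1/2}$). I expect to handle this by exploiting the fact that when the core has few, hence large, components, $g(\vec n)$ is close to $1$: an explicit bound on $F(\vec n)/T(\vec n) - 1$ obtained by retaining only the dominant non-tree terms in the partition-sum for $F(\vec n)$ should place $g(\vec n)$ comfortably above any $\pr(F_t \text{ connected})$ in the permitted range. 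The threshold $n/3$ arises as the balance point between this sharpened small-$k$ estimate and the crude large-$k$ bound $g(\vec n) \geq \pr(F_k \text{ connected})$.
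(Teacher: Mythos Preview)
Your decomposition into the bridgeless core plus a weighted bridge-forest, and your proof that $g(\vec n)\ge \pr(F_k\text{ connected})$ via the per-partition inequality, are exactly what the paper does in Lemma~\ref{lem.1}. Your Jensen reduction to $\prod_B r_B\ge n/k$ is a valid minor rearrangement of the paper's Jensen step: writing $\sum_B(|B|-2)\log r_B=\sum_B|B|\log r_B-2\log\prod_B r_B$ and applying concavity of $\log$ to the first sum with weights $|B|/k$ gives $\le k\log(n/k)-2\log\prod_B r_B$, so $\prod_B r_B\ge n/k$ suffices.

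The genuine gap is the case $k\le n/3$, which you explicitly leave as a plan. The paper does \emph{not} bound $F(\vec n)/T(\vec n)-1$ by analysing the partition sum directly; instead it invokes a second, independent lower bound $g(\vec n)>e^{-k/n}$, obtained from the bridge-addable inequality $\pr(\kappa(\Fn)=i+1)\le\frac{1}{i}\cdot\frac{k}{n}\,\pr(\kappa(\Fn)=i)$ (Lemma~3.2 of~\cite{amr2012}) summed over $i$. This is then combined with an elementary numerical fact, Lemma~\ref{lem.2a}: $\pr(F_t\text{ connected})<e^{-1/3}$ for every $t\ge 2$, proved by lower-bounding $|\cF_t|/t^{t-2}$ using only the $\frag\le 2$ contributions. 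For $k\le n/3$ this gives
\[
g(\vec n)>e^{-k/n}\ge e^{-1/3}>\pr(F_n\text{ connected})\ge\min_{n/3<t\le n}\pr(F_t\text{ connected}).
\]
So the threshold $n/3$ is the balance point between the bridge-addable bound $e^{-k/n}$ and the uniform upper bound $e^{-1/3}$ on forest connectivity, not between a sharpened small-$k$ estimate and the large-$k$ bound as you describe.

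Your proposed route of retaining dominant non-tree terms is plausible in spirit (and $g(\vec n)>e^{-k/n}$ confirms your intuition that $g$ is near $1$ when $k$ is small), but carrying it out uniformly over all $\vec n$ is not as routine as you suggest: small $k$ does not force the components to be large (some $n_i$ may equal $1$), and the partition sum has contributions at every level. The paper bypasses all of this with the one-line bridge-addable bound plus a short numerical lemma.
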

\noindent
The value $\alpha=0.4$ can be increased towards $\frac12 \, $ : in the final section of the paper we improve it to $0.48 n$, and discuss pushing it up further to $\frac12$.  Conjecture~\ref{conj2} says that we can push $\alpha$ up to 1.

Since this paper was (essentially) completed, the original Conjecture~\ref{conj1} (for bridge-addable rather than bridge-alterable classes) has been fully proved by Chapuy and Perarnau, see~\cite{cp2016}.

%%%%%%%%%%%%%%%%%%%%%%%%%%%%%%%%%%%%%%%%%%%%%%%%%%%%%%%%%%%%%%%%%%%%%%%%%%%%%%%%%%%

\section{Proof of Theorem~\ref{thm.newa}}
\label{sec.proof}

We use two lemmas in the proof.

\begin{lemma} \label{lem.1}
 Let $\cA$ be a bridge-alterable class of graphs,
  let $n$ be a positive integer, let $R_n \in_u \cA$, and let $F_t \in_u \cF$ for $t=1,2,\ldots$. 
  Then
\begin{equation} \label{eqn.gen1}
  \pr(R_n \mbox{ is connected}) \geq \min_{t=1,\ldots,n} \max \{ e^{-\frac{t}{n}},  \pr(F_t \mbox{ is connected})\}.
\end{equation}
\end{lemma}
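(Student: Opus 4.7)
The plan is to exploit bridge-alterability to decompose each $G \in \cA_n$ into its bridgeless subgraph $G^{o}$ (obtained by deleting every bridge) and its bridge-forest. Bridge-alterability guarantees $G^{o} \in \cA_n$, and conversely for every bridgeless $H \in \cA_n$ and every forest $T$ on $[n]$ whose edges all connect distinct components of $H$ (call such $T$ a \emph{cross-forest} for $H$) the graph $H \cup T$ lies in $\cA_n$. This yields a bijection between $\cA_n$ and the set of such pairs $(H,T)$; in particular, for $R_n \inu \cA_n$ and $H := R_n^{o}$, conditional on $H$ the bridge-forest of $R_n$ is uniform over cross-forests of $H$, and $R_n$ is connected iff this bridge-forest connects the 2-edge-connected components of $H$.

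Write $q(H) := \pr(R_n \text{ connected} \mid R_n^{o} = H)$. Since $q(H)$ depends only on the unordered component-size partition $(n_1, \ldots, n_k)$ of $H$, and in a simple graph each $n_i \in \{1\} \cup \{3,4,\ldots\}$, my plan is to prove the pointwise estimate
\[
  q(H) \;\geq\; \min_{1 \leq t \leq n}\,\max\bigl\{e^{-t/n},\,\pr(F_t \text{ connected})\bigr\}
\]
for every bridgeless $H \in \cA_n$. Averaging this over $H$ then yields $\pr(R_n\text{ connected}) = \E[q(R_n^{o})]$ and hence the lemma.

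To establish the pointwise bound, one picks $t = t(H)$ based on the shape of $H$ and argues one of two complementary estimates. \textbf{Regime A (giant 2-edge-connected component).} If $H$ has a component of size $\geq n - t$ for a small $t$, then only at most $t$ vertices lie outside the giant, and a direct counting of cross-forests on $[n]$ whose edges hit the giant — via a bridge-exchange identity strengthened by bridge-alterability in the style of Balister--Bollob\'as--Gerke — should give $q(H) \geq e^{-t/n}$. \textbf{Regime B (many singletons).} If no such giant exists, then $H$ has at least $t$ singleton components; in the extreme all-singleton case $H = ([n],\emptyset)$ the cross-forests of $H$ are exactly the ordinary forests on $[n]$, so $q(H) = \pr(F_n\text{ connected})$. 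More generally, a coupling comparing the cross-forest of $H$, restricted to its singleton subset, with a uniform random forest on $[t]$ should give $q(H) \geq \pr(F_t \text{ connected})$. Choosing $t(H)$ in whichever regime applies arranges $q(H) \geq \max\{e^{-t(H)/n},\pr(F_{t(H)}\text{ connected})\}$, which in turn is $\geq \min_{t}\max\{e^{-t/n},\pr(F_t \text{ connected})\}$.

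The main obstacle is Regime B. The restriction of a uniform cross-forest of $H$ to its singleton subset is \emph{not} uniform over forests on that set, because cross-forests must be globally acyclic and must avoid all intra-component pairs in the non-singleton part of $H$. To handle this I expect to need a symmetry or swap argument that uses \emph{both} halves of bridge-alterability (closure under bridge addition and under bridge deletion) to show that the biased conditional distribution of the singleton restriction dominates the uniform distribution on forests of $[t]$ in its connectivity probability. This comparison is where the bridge-alterable hypothesis, as opposed to merely bridge-addable, is essential, and it will be the technical heart of the proof.
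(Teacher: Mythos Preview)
Your decomposition of $R_n$ into its bridgeless part $H=R_n^{o}$ together with a uniformly random cross-forest is correct and matches the paper's set-up. But from there the paper proceeds differently, and your plan has a genuine gap.

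The paper does \emph{not} split into two regimes, and it does \emph{not} restrict to the singleton part of $H$. Instead, if $H$ has components of sizes $n_1,\ldots,n_t$ (so $t$ is simply the number of components of $H$, and this is the $t$ that enters the bound), it \emph{contracts} each component to a single vertex. A cross-forest of $H$ then corresponds to a forest on $[t]$, and the number of cross-forests mapping to a given $F\in\cF_t$ is exactly $\mass(F):=\prod_{i=1}^{t} n_i^{d_F(i)}$. Thus $q(H)=\pr(F^{\bf n}\text{ is connected})$, where $F^{\bf n}$ is the random forest on $[t]$ with $\pr(F^{\bf n}=F)\propto\mass(F)$. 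The paper then proves \emph{both} inequalities for this single value of~$t$: the bound $q(H)>e^{-t/n}$ via a standard bridge-deletion ratio argument, and the bound $q(H)\geq\pr(F_t\text{ is connected})$ by showing $\mass(\cF_t^{k})\leq N\,(n/t)^{t-2}\,|\cF_t^{k}|$ for each $k$, using the weighted Cayley formula together with Jensen's inequality (concavity of $\log$).

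Your Regime~B --- restricting the cross-forest to the singleton subset and comparing with a uniform forest there --- is, as you yourself acknowledge, not uniform, and the hoped-for ``symmetry or swap argument'' is not a proof. Contraction, not restriction, is the move that makes the comparison clean: the non-singleton components are not discarded but absorbed into the degree weights $n_i^{d_F(i)}$. There is also a logical slip in your outline: establishing only \emph{one} of $q(H)\geq e^{-t/n}$ or $q(H)\geq\pr(F_t\text{ is connected})$ for your chosen $t(H)$ does not yield $q(H)\geq\max\{e^{-t(H)/n},\pr(F_{t(H)}\text{ is connected})\}$; you need both for the same $t$, which is exactly what the paper achieves with $t$ equal to the number of components of~$H$.
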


\begin{lemma} \label{lem.2a}
 Let $\alpha = 0.4$. For each $n=2,3,\ldots$
\[ \pr(F_n \mbox{ is connected}) < e^{-\alpha}.\]
\end{lemma}

\noindent
To deduce Theorem~\ref{thm.newa} from these lemmas, observe that by Lemma~\ref{lem.2a}, for each $1 \leq t \leq \alpha n$
\[ e^{-\frac{t}{n}} \geq e^{-\alpha} \geq \pr(F_n \mbox{ is connected}), \]
and so the right side in~(\ref{eqn.gen1}) is at least the right side in~(\ref{eqn.thmnewa}).
\medskip
  
\begin{proofof} {Lemma~\ref{lem.1}}  \hspace{.1in}
  Our proof initially follows the lines of the proofs in~\cite{amr2012} and~\cite{kp2013}
  of Theorem~\ref{thm.old},
  in that we aim to lower bound the probability of connectedness for the random graph $\Fn$ introduced below.
  Consider a fixed $n \geq 2$.

  Given a graph $G$, let $b(G)$ be the graph obtained by removing all bridges from $G$.
  We say $G$ and $G'$ are {\em equivalent} if $b(G)=b(G')$. 
  This is an equivalence relation on graphs, and if a graph $G$ is in $\cA_n$
  then so is the whole equivalence class $[G]$.  Thus $\cA_n$ is a union of disjoint equivalence classes.
  To prove the lemma we consider an arbitrary (fixed) equivalence class.

  Fix a bridgeless graph $G$ on vertex set $[n]$
  and let $\cB=[G]$.  Let $G$ have $t$ components, with $n_1,\ldots,n_t$ vertices, where $n = \sum_{i=1}^t n_i$.
  We use $\bfn = (n_1,\ldots,n_t)$ to define probabilities.
  First, given a forest $F \in \cF_t$, let
\[
  \mass(F) = \prod_{i=1}^t n_i^{d_F(i)},
\]
  where $d_F(i)$ denotes the degree of vertex $i$ in $F$.
  For  $\cF' \subseteq \cF_t$ let $\mass(\cF') = \sum_{F \in \cF'} \mass(F)$.
  Now let 
\[ \pr(\Fn =F) = \frac{\mass(F)}{\KF} \;\; \mbox{ for each } F \in \cF_t. \] 
  By Lemma 2.3 of~\cite{amr2012}, for a uniformly random element $R^{\cB}$ of $\cB$,
  \[
    \pr(R^{\cB} \mbox{ is connected}) = \pr(\Fn \mbox{ is connected}).
  \]
  Hence to prove the lemma it suffices to consider $\Fn$, and show that
\begin{equation} \label{eqn.RF}
  \pr(\Fn \mbox{ is connected}) \geq \max  \{ e^{-\frac{t}{n}},  \pr(F_t \mbox{ is connected}) \}.
\end{equation}
  To see this, observe that then the probability that $R_n$ is connected is an average of values each at least
  the right side of~(\ref{eqn.RF}) for some $t$, and so it is at least the right side in~(\ref{eqn.gen1}).
\medskip

  The proof of~(\ref{eqn.RF}) breaks into two parts, and the first is standard.  Given a graph $G$, let $\kappa(G)$ denote the number of components.  By Lemma 3.2 of~\cite{amr2012}, for $i=1,\ldots,t-1$
\[ \pr(\kappa(\Fn) = i+1) \leq \frac1{i} \frac{t}{n} \pr(\kappa(\Fn)=i), \]
  and thus
\[ \pr(\kappa(\Fn) = i+1) \leq \frac1{i!} \left(\frac{t}{n}\right)^{i}  \pr(\kappa(\Fn)=1). \]
  Hence
\begin{eqnarray*}
   1= \sum_{i=0}^{t-1} \pr(\kappa(\Fn)\!=\!i\!+\!1) 
 & \leq &
   \sum_{i=0}^{t-1} \frac1{i!} \left(\frac{t}{n}\right)^{i}  \pr(\kappa(\Fn)\!=\!1) < e^{\frac{t}{n}} \cdot  \pr(\kappa(\Fn)\!=\!1)
\end{eqnarray*}
and so $ \pr(\Fn \mbox{ is connected}) > e^{-\frac{t}{n}}$ (as noted at the end of Section 3 of~\cite{amr2012}).
\smallskip

It remains to show that
\begin{equation} \label{eqn.RF2}
  \pr(\Fn \mbox{ is connected}) \geq \pr(F_t \mbox{ is connected}).
\end{equation}
We may assume that $t \geq 2$.
Let $\cT$ be the class of trees. % and let $\KT =\sum_{F \in \cT_t} \mass{F}$.
Then
\begin{equation}\label{eqn.KT}
  \KT = \prod_{i=1}^t n_i \cdot n^{t-2}.
\end{equation}
This result is proved for example in~\cite{amr2012}
%based on Pr\"{u}fer coding 
(see the proof of Lemma 4.2)  and in~\cite{kp2013}, though in fact it has long been known, see Theorem 6.1 of Moon~\cite{moon70} (1970), and see also Problems 5.3 and 5.4 of Lov\'asz~\cite{lovasz}. 
  We let $N=\prod_{i=1}^t n_i$ and rewrite~(\ref{eqn.KT}) as
\begin{equation}\label{eqn.KT2}
  \KT = N (\frac{n}{t})^{t-2}\cdot |\cT_t|.
\end{equation}
  For the case $t=2$, $\ \mass(\cT_2)=n_1 n_2 \, $ and
  $\, \mass(\cF_2) = \mass(\cT_2) +1$, so
\[ \pr(\Fn \mbox{ is connected}) = %\frac{\mass(\cT_2)}{\mass(\cT_2)+1}=
\frac{n_1 n_2}{n_1 n_2 +1} \geq \frac12 = \pr(F_2 \mbox{ is connected}).\]
Thus we may assume from now on that $t \geq 3$.
\medskip  

  For each integer $k$ with $1 \leq k \leq t$ let $\cF_t^k$ be the set of forests in $\cF_t$ with $k$ components.  We shall show that for each such $k$
\begin{equation} \label{eqn.massk}
  \mass(\cF_t^k) \leq  N (\frac{n}{t})^{t-2}\cdot |\cF_t^k|.
\end{equation}
  Summing over $k$ will then give
\[  \mass(\cF_t) \leq  N (\frac{n}{t})^{t-2} \cdot |\cF_t|, \]
  and so, using also~(\ref{eqn.KT2})
\[ \pr(\Fn \mbox{ is connected}) = \frac{\mass(\cT_t)}{\mass(\cF_t)} \geq \frac{|\cT_t|}{|\cF_t|} 
= \pr(F_t \mbox{ is connected}).\]
  This will complete the proof of~(\ref{eqn.RF2}) and thus of the lemma.  Hence it remains now to prove~(\ref{eqn.massk}).
\medskip

  Fix an integer $k$ with $1 \leq k \leq t$.
  Given a partition ${\bf U}= (U_1,\ldots,U_k)$ of $[t]$ into $k$ unordered sets,
  let $J=J({\bf U}) = \{i: |U_i| \geq 2\}$, and let $\cF({\bf U})$ be the set of forests in $\cF_t^k$
  such that the $U_i$ are the vertex sets of the $k$ component trees.
  For non-empty sets $U \subseteq [t]$,
  let $p(U)= \prod_{i \in U} n_i$ and $s(U) = \sum_{i \in U} n_i$.
  Observe that the mass of a forest is the product of the masses of its component trees,
  and a singleton component just gives a factor~1.
  Now fix a partition ${\bf U}= (U_1,\ldots,U_k)$ as above.
  
  If $J=\emptyset$ then $\mass(\cF({\bf U}))= 1 = |\cF({\bf U})|$.  Now suppose that $J \neq \emptyset$.  
   Then by~(\ref{eqn.KT2})
\begin{eqnarray*} 
   \mass(\cF({\bf U}))
  &=&
   \prod_{i \in J} p(U_i) \,\left(\frac{s(U_i)}{|U_i|} \right)^{|U_i|-2}  |U_i|^{|U_i|-2}\\
  & \leq & N \cdot \prod_{i \in J} \left(\frac{s(U_i)}{|U_i|} \right)^{|U_i|-2}
    \cdot \prod_{i \in J} |U_i|^{|U_i|-2}\\
  & = & N \cdot \prod_{i \in J} \left(\frac{s(U_i)}{|U_i|} \right)^{|U_i|-2}
    \cdot |\cF({\bf U})|.  
\end{eqnarray*}  

\noindent
  To handle the middle factor here, we can use Jensen's inequality, since $\log(x)$ is concave:
  we have
\begin{eqnarray*}  
 &&
   \log \, \prod_{i \in J} \left(\frac{s(U_i)}{|U_i|} \right)^{|U_i|-2}\\
 &=&
  (t-2) \sum_{i \in J} \frac{|U_i|-2}{t-2} \log \frac{s(U_i)}{|U_i|} \\
 & \leq &
  (t-2) \sum_{i \in J} \frac{|U_i|}{t} \log \frac{s(U_i)}{|U_i|} \hspace{.3in} \mbox{ since } |U_i| \leq t\\
  & \leq &
  (t-2) \sum_{i=1}^{k} \frac{|U_i|}{t} \log \frac{s(U_i)}{|U_i|} \\
 & \leq &
  (t-2) \log \left(\sum_{i=1}^{k} \frac{|U_i|}{t} \frac{s(U_i)}{|U_i|} \right) \;\;\; \mbox{ since log is concave}\\
 & = &
  (t-2) \log \frac{n}{t}.  
\end{eqnarray*}
  Hence in each case
\[  \mass(\cF({\bf U})) \leq N \left(\frac{n}{t} \right)^{t-2} |\cF({\bf U})|. \]
  So, summing over partitions ${\bf U}=(U_1,\ldots,U_k)$ of $[t]$,
\begin{eqnarray*}
   \mass(\cF_t^k) 
  & =&
    \sum_{{\bf U}=(U_1,\ldots,U_k)} \mass(\cF({\bf U}))\\
  & \leq &
   \sum_{{\bf U}=(U_1,\ldots,U_k)} N \left(\frac{n}{t} \right)^{t-2} |\cF({\bf U})|\\
  & = &
   N \left(\frac{n}{t} \right)^{t-2} |\cF_t^k|.
\end{eqnarray*}
  This completes the proof of~(\ref{eqn.massk}), and thus the proof of Lemma~\ref{lem.1}.
\end{proofof}
\bigskip

\noindent
 To prove Lemma~\ref{lem.2a} we will use the standard inequality
\begin{equation} \label{standard}
  (1-\frac{j}{n})^{n-j} \geq e^{-j} \;\;\;\; \mbox{ for } 1 \leq j < n.
\end{equation}
  [To see this, fix $j$ and let $g(x)=(x-j) \log (1-\frac{j}{x})$ for $x>j$. Then
\[ g'(x) = (x-j)(\frac1{x-j}-\frac1{x})+ \log(1-\frac{j}{x}) = \frac{j}{x} +  \log(1-\frac{j}{x}) <0, \]
  and so $g(n)$ is decreasing for $n>j$.  But $g(n) \to e^{-j}$ as $n \to \infty$, so $g(n)>e^{-j}$ for each $n>j$.]   
  \medskip

\begin{proofof}{Lemma~\ref{lem.2a}} \hspace{.1in}
For a graph $G$ let $\frag(G)$ be the number of vertices in $G$ less the number of vertices in a largest component; and for integers $n$ and $j$ with $1 \leq j < n$ let $f(n,j)$ be the number of forests $F$ on $[n]$ with $\frag(F)=j$. 
%Recall that $(1-\frac{j}{n})^{n-j} \geq e^{-j}$ for $1 \leq j \leq n$.
By~(\ref{standard}), for $1 \leq j < n/2$
\begin{eqnarray*} 
  f(n,j) & = &
  \binom{n}{j} |\cF_j| (n-j)^{n-j-2}\\
 & = &
   n^{n-2} \cdot \frac{|\cF_j|}{j!} \cdot \frac{(n)_j}{n^j} (1-\frac{j}{n})^{n-j-2}\\
  & \geq &
  n^{n-2} \cdot \frac{|\cF_j|}{j! \, e^j} \cdot \frac{(n)_j}{n^j} (1-\frac{j}{n})^{-2}.
\end{eqnarray*}
Now consider just $j \leq 2$ and let $n \geq 5$. Then $\frac{(n)_j}{n^j} (1-\frac{j}{n})^{-2} \geq 1$, so
\[ \frac{|\cF_n|}{n^{n-2}} > \sum_{j=0}^{2} \frac{|\cF_j|}{j! \, e^j}
   = 1+ \frac1{e} + \frac{2}{2! \, e^2} \approx 1.5032 \approx e^{0.4076}.\]
It is easy to check that this holds also for $n=2, 3$ and $4$; so
\[  \pr(F_n \mbox{ is connected}) < e^{-2/5} %<e^{-1/3}
\;\; \mbox{ for each } n \geq 2,\]
as required.
\end{proofof}

%%%%%%%%%%%%%%%%%%%%%%%%%%%%%%%%%%%%%%%%%%%%%%%%%%%%%%%%%%%%%%%%%%%%%%%%%%%%%%%%%%%%%%

\section{Concluding Remarks}
\label{sec.concl}

We can easily improve on Lemma~\ref{lem.2a} by pushing the proof further and doing some checking.
  
\begin{lemma} \label{lem.2}
If we set $\alpha =0.48$ then for each $n=2,3,\ldots$
\[ \pr(F_n \mbox{ is connected}) < e^{-\alpha}.\]
\end{lemma}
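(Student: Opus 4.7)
The plan is to push the argument of Lemma~\ref{lem.2a} further by summing over more values of $j$ and by retaining one extra term in the Taylor-type estimate~(\ref{standard}) used there.  The target is to show $|\cF_n|/n^{n-2} > e^{0.48}$, which is reachable in principle because $\sum_{j \geq 0} |\cF_j|/(j!\, e^j) = F(1/e) = e^{1/2}$, where $F$ is the exponential generating function of labelled forests, so sufficiently long partial sums exceed $e^{0.48}$.

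The key input is the identity
\[
  (n-j)\log(1-j/n) \;=\; -j + \sum_{k \geq 2}\frac{j^k}{k(k-1)\, n^{k-1}},
\]
obtained by expanding $\log(1-j/n) = -\sum_{k \geq 1}(j/n)^k/k$ and regrouping; every term after $-j$ is strictly positive. Truncating at $k=2$ already yields the refinement $(1-j/n)^{n-j} \geq e^{-j + j^2/(2n)}$, which plugged into
\[
  \frac{f(n,j)}{n^{n-2}} \;=\; \frac{|\cF_j|}{j!}\cdot\frac{(n)_j}{n^j}(1-j/n)^{n-j-2}
\]
from the proof of Lemma~\ref{lem.2a} gives
\[
  \frac{f(n,j)}{n^{n-2}} \;\geq\; \frac{|\cF_j|}{j!\, e^j}\, c'_j(n), \qquad c'_j(n) := e^{j^2/(2n)}\,\frac{(n)_j}{n^j}\,(1-j/n)^{-2}.
\]
A short Taylor expansion shows $\log c'_j(n) = 5j/(2n) + O(1/n^2)$, so for each fixed small $j$, $c'_j(n) \geq 1$ once $n$ is large enough. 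Summing over $j = 0, 1, \ldots, j_0$ with $j_0 = 7$ then yields $|\cF_n|/n^{n-2} \geq \sum_{j=0}^{7} |\cF_j|/(j!\, e^j)$, and a direct numerical check using $|\cF_0|,\ldots,|\cF_7| = 1, 1, 2, 7, 38, 291, 2932, 36961$ shows that the partial sum is approximately $1.6234$, comfortably larger than $e^{0.48} \approx 1.6161$. For the few small values of $n$ not covered by this asymptotic step, I would verify $\pr(F_n \mbox{ is connected}) < e^{-0.48}$ directly from tabulated values of $|\cF_n|$.

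The main obstacle I expect is making the ``$c'_j(n) \geq 1$'' step uniformly rigorous across all admissible $n$ rather than merely asymptotic.  A clean workaround is to retain also the $k=3$ term in the expansion above, giving
\[
  (1-j/n)^{n-j} \;\geq\; \exp\!\Big(-j + \frac{j^2}{2n} + \frac{j^3}{6n^2}\Big),
\]
so that the extra $j^3/(6n^2)$ absorbs the $O(1/n^2)$ remainder in the expansion of $\log c'_j(n)$ for $j \leq 7$; this collapses the remaining case analysis to a short, explicit check on a small set of base values of~$n$.
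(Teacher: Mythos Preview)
Your proposal is correct and follows essentially the same route as the paper: extend the sum in the proof of Lemma~\ref{lem.2a} to more values of $j$, check numerically that the partial sum exceeds $e^{0.48}$, and verify small $n$ directly. The one difference is that you introduce the refinement $(1-j/n)^{n-j} \geq e^{-j+j^2/(2n)}$ (and contemplate the $j^3/(6n^2)$ term), whereas the paper keeps the unrefined bound~(\ref{standard}) and simply observes that $\frac{(n)_j}{n^j}(1-j/n)^{-2} \geq 1$ already holds for every $j \leq 6$ and $n > 12$; summing only to $j=6$ gives $\approx 1.6167 > e^{0.48}$, and $2 \leq n \leq 12$ is handled by direct lookup. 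So your extra Taylor term and the $j=7$ summand are not needed --- the ``obstacle'' you identified dissolves once you notice that the required inequality on $c_j(n)=\frac{(n)_j}{n^j}(1-j/n)^{-2}$ holds outright for the relevant range without any correction factor.
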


\begin{proof} 
  \hspace{.1in}
  It is straightforward to check that $\frac{(n)_j}{n^j} (1-\frac{j}{n})^{-2} \geq 1$ for each $j \leq 6$ and $n>12$.
  Hence, arguing as in the proof of Lemma~\ref{lem.2a}, for $n>12$
\[ \frac{|\cF_n|}{n^{n-2}} > \sum_{j=0}^{6} \frac{|\cF_j|}{j! \, e^j}
  \approx 1.6167 \approx e^{0.4804} > e^{0.48}.\]
This holds also for $2 \leq n \leq 12$: to check this we may for example use~\cite{oeis} for the values $|\cF_j|$ for $j \leq 12$.
\end{proof}
\medskip

  Lemma~\ref{lem.2} allows us to strengthen Theorem~\ref{thm.newa} as follows: with the same premises, if we set $\alpha=0.48$ then
\begin{equation} \label{eqn.thmnew}
 \pr(R_n \mbox{ is connected}) \geq \min_{\alpha n \leq t \leq n} \pr(F_{t} \mbox{ is connected}).
\end{equation}

  It is well known (see for example Flajolet and Sedgewick~\cite{fs09} Section II.5.3)
  that $\sum_{j \geq 1} \frac{|\cT_j|}{j! \, e^j} = \frac12$
  and so by the exponential formula $\sum_{j \geq 0} \frac{|\cF_j|}{j! \, e^j} = e^{\frac12}$.
  We could expect with more work to increase the value $\alpha=0.48$ in~(\ref{eqn.thmnew})
  to nearer $\frac12$ -- but can we go all the way to $\frac12$?
  
Perhaps $\pr(F_n \mbox{ is connected})$ is increasing from $n = 4$ onwards?  (For $n=1,\ldots,6$ the values of the probability are $1$, $\frac12$, $\frac37 \approx 0.4286$, $\frac{8}{19} \approx 0.4211$, $\frac{125}{291} \approx 0.4295$, $\frac{1296}{2932} \approx 0.4420$ (to 4 decimal places), with minimum at $n=4$.)  In that case, we would have $\pr(F_n \mbox{ is connected}) \leq e^{-\frac12}$ for each $n \geq 2$; and we could improve the bounds in Theorem~\ref{thm.newa} and in~(\ref{eqn.thmnew}) to
  %the more satisfactory bound
\begin{equation} \label{conj.concl1}
  \pr(R_n \mbox{ is connected}) \geq \pr(F_{\lceil n/2 \rceil} \mbox{ is connected}) \;\;\; \mbox{ for all } n \geq 7,
\end{equation}
which is getting closer to Conjecture~(\ref{conj2}).
Let us re-state the above question as a final conjecture.  
\begin{conjecture} \label{conj.incr}
$\pr(F_n \mbox{ is connected})$ is increasing for $n \geq 4$.
\end{conjecture}

In work in progress jointly with Xena Cologne-Brookes, %~\cite{xenacolin} 
we have shown using standard analytic methods (following a suggestion from a referee) that $\pr(F_n \mbox{ is connected})$ is strictly increasing for $n$ sufficiently large, which shows that the inequality~(\ref{conj.concl1}) holds for $n$ sufficiently large.  The aim is to establish the full Conjecture~\ref{conj.incr}, and thus the full inequality~(\ref{conj.concl1}), though the proof seems to depend on careful analytic estimates together with checking for many small values of $n$  (and thus to be of a different nature from the combinatorial proofs in this paper).
\bigskip

\noindent
{\bf Acknowledgements}
  I am grateful to Kostas Panagiotou for pointing out a problem with an earlier version of a proof; and to the referees for helpful comments, and to one referee in particular for suggesting how to use %generating function 
analytic methods to improve on Lemma~\ref{lem.2}.
%; and to Xena Cologne-Brooks for working on this.

%%%%%%%%%%%%%%%%%%%%%%%%%%%%%%%%%%%%%%%%%
%%%%%%%%%%%%%%%%%%%%%%%%%%%%%%%%%%%%%%%%%

\end{document}